\newtheorem{theorem}{Theorem}[section]
\newtheorem{proposition}{Proposition}
\newtheorem{lemma}{Lemma}[section]
\theoremstyle{definition}
\newtheorem{definition}{Definition}
\newtheorem{remark}{Remark}
\numberwithin{equation}{section}
\begin{document}

\long\def\symbolfootnote[#1]#2{\begingroup%
\def\thefootnote{\fnsymbol{footnote}}\footnote[#1]{#2}\endgroup}
\title{\textbf{A class of symbols that induce bounded composition operators for Dirichlet-type spaces on the disc.}}
\author{Athanasios Beslikas}
\maketitle

\begin{abstract}
In this note we study the problem of determining the holomorphic self maps of the unit disc that induce a bounded composition operator on Dirichlet-type spaces. We find a class of symbols $\varphi$ that induce a bounded composition operator on the Dirichlet-type spaces, by applying results of the multidimensional theory of composition operators for the weighted Bergman spaces of the bi-disc.\\\\
\textbf{Mathematics Subject Classification:} primary: 30H99, secondary: 30H20.\\
\textbf{Keywords:} \textit{Dirichlet-type spaces, composition operators, weighted Bergman spaces.}
 \end{abstract}
\maketitle
 \section{Introduction} The problem of finding sufficient and necessary conditions for a composition operator to be bounded and compact has attracted a lot of attention the past few years. The resolution of the boundedness and compactness of composition operators on Hardy and Bergman spaces on the unit disc $\mathbb{D}$ is regarded as a classical result in holomorphic function spaces theory, and it was treated by Shapiro in \cite{Shapiro}. Since then, many other seminal works have provided us with characterizations of the holomorphic self maps $\varphi:\mathbb{D}\to \mathbb{D}$ that induce a bounded composition operator. For instance, the Dirichlet type spaces have been treated by J.Pau and P.Perez in \cite{Pau}. In this paper the authors gave necessary and sufficient conditions, for both continuity and compactness of the composition operator on the Dirichlet-type space. The \textit{Nevanlinna} and \textit{Generalized Nevanlinna counting function} was the main tool that helped significantly the authors of the abovementioned article to obtain these exceptional results. The interested reader can check the mentioned sufficient and necessary condition that the authors of \cite{Pau} provide in Theorem 3.1. of the mentioned paper. One observes that this characterization is obtained for specific exponents of the radial weight and not for all positive exponents.\\
 Compared to [7], in this note we will not make use of such tools and we will follow a completely different and somewhat novel approach, that combines results from multidimensional cases, namely the bi-disc case. The work of \L{}.Kosinski  in \cite{Kosinski} and F.Bayart \cite{Bayart2} dealt with the boundedness of composition operators in the weighted Bergman spaces of the bi-disc and provided us with characterizations of symbols that induce bounded composition operators. We will provide only a sufficient condition, which works for all positive exponents of the radial weight of the Dirichlet space, which is somewhat interesting to notice.\\
 In our work we present a way to exploit these multidimensional results, to obtain bounded composition operators for the unit disc case. In particular, we give a sufficient condition for some symbols $\varphi$ to induce a bounded composition operator for the Dirichlet spaces $\mathcal{D}_p(\mathbb{D}), p>0,$ by applying one of the main results presented in \cite{Kosinski}, namely the \textit{rank sufficiency Theorem} for the weighted Bergman space of the bi-disc. Our approach can be  described in a short manner as follows:
 Considering some holomorphic self map $\varphi$ of the unit disc, which is of class $\mathcal{C}^1$ on the boundary of the disc $\mathbb{T}$, we induce a composition operator $C_{\Psi}$ which is continuous on the weighted Bergman space $A^p_{\beta}(\mathbb{D}^2)$ of the bidisc, where $\Psi$ will be defined in terms of $\varphi.$ Then, by using the Lift operator that we introduce on Section 2 and a, recently obtained, double integral characterization of the Dirichlet-type space $\mathcal{D}_p(\mathbb{D})$ that can be found in \cite{Balooch}, we give a sufficient condition for the composition operator $C_{\varphi}$ to be bounded on $\mathcal{D}_p(\mathbb{D}).$
 \section{Notations and tools.}
 Throughout our note, we will denote by $\mathcal{O}(\mathbb{D},\mathbb{D})\cap \mathcal{C}^1(\overline{\mathbb{D}})$ the holomorphic self maps of the unit disc that are of class $\mathcal{C}^1$ on the boundary. In the same notation, by replacing $\mathbb{D}$ with $\mathbb{D}^2$ we will talk about the holomorphic self maps of the bi-disc that are of class $\mathcal{C}^1$ at the (topological) boundary of the bidisc, the bi-torus $\mathbb{T}^2$. Whenever we refer to a holomorphic function on the disc, we will simply write $f\in H(\mathbb{D}).$ Whenever the notation $a\asymp b$ appears, it means that there exist two positive constants $C_1,C_2$ such that $C_1a\leq b\leq C_2 a$ and whenever we encounter the notation $a\lesssim b,$ it means that there is a positive constant $C>0$ such that $a\leq Cb.$  We recall that for $p>0$ the Dirichlet type space of the unit disc, will be denoted by $\mathcal{D}_p(\mathbb{D}),$ and they consist of the holomorphic functions $f$ on the unit disc $\mathbb{D},$  such that
 $$\int_{\mathbb{D}}|f'(z)|^2(1-|z|^2)^pdA(z)<+\infty.$$
By $A_{\beta}^p(\mathbb{D}^2),\beta \ge -1,$ we denote the classical weighted Bergman space of the bi-disc $\mathbb{D}^2,$ consisted of the holomorphic functions on the bi-disc, such that:
$$\int_{\mathbb{D}^2}|f(z,w)|^pdA_{\beta}(z)dA_{\beta}(w)<+\infty.$$
The measure $dA(z)=\frac{1}{\pi}dxdy$ is the normalized Lebesgue measure on the unit disc, while $dA_{\beta}(z)=c_{\beta}(1-|z|^2)^{\beta}dA(z)$ for $\beta \ge -1.$ The following recent result, that can be found on \cite{Balooch}, will be of critical importance in our note.
\begin{theorem} \textit{Let $\sigma, \tau \ge -1 $ and $\beta\in \mathbb{R},$ such that $\frac{\max(\sigma,\tau)}{2}-1<\beta\leq \frac{\sigma+\tau}{2}.$ Let $f\in H(\mathbb{D}).$ Then:}
$$\int_{\mathbb{D}}\int_{\mathbb{D}}\frac{|f(z)-f(w)|^2}{|1-\overline{w}z|^{2(\beta+2)}}dA_{\sigma}(z)dA_{\tau}(w)\asymp ||f||^2_{\mathcal{D}_{\sigma+\tau-2\beta}(\mathbb{D})}.$$
\end{theorem}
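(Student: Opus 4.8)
The plan is to reduce both sides of the asserted equivalence to one and the same weighted $\ell^2$-norm of the Taylor coefficients of $f$, exploiting the rotational invariance of the measures $dA_\sigma,dA_\tau$ and of the kernel. Write $f(z)=\sum_{n\ge0}a_nz^n$, and set $p=\sigma+\tau-2\beta$, which is nonnegative exactly because $\beta\le\frac{\sigma+\tau}{2}$. For the right-hand side, a computation in polar coordinates gives $\int_{\mathbb{D}}|z|^{2m}(1-|z|^2)^p\,dA(z)=\frac{\Gamma(m+1)\Gamma(p+1)}{\Gamma(m+p+2)}$, so applying this with $m=n-1$ and using $\frac{\Gamma(n)}{\Gamma(n+p+1)}\asymp n^{-(p+1)}$ one finds
$$\|f\|^2_{\mathcal{D}_{\sigma+\tau-2\beta}(\mathbb{D})}=\sum_{n\ge1}n^2|a_n|^2\,\frac{\Gamma(n)\Gamma(p+1)}{\Gamma(n+p+1)}\asymp\sum_{n\ge1}n^{\,1-\sigma-\tau+2\beta}\,|a_n|^2.$$
Thus it suffices to show that the double integral is comparable to this same series.

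For the double integral I would expand the kernel by the binomial series $\frac{1}{(1-\overline{w}z)^{\beta+2}}=\sum_{k\ge0}c_k(\overline{w}z)^k$, where $c_k=\frac{\Gamma(k+\beta+2)}{k!\,\Gamma(\beta+2)}\asymp(1+k)^{\beta+1}$, and similarly for the conjugate factor, so that the kernel becomes $\sum_{k,l\ge0}c_kc_l\,z^k\overline{z}^l\,\overline{w}^kw^l$. Writing $|f(z)-f(w)|^2=\sum_{n,m}a_n\overline{a_m}(z^n-w^n)(\overline{z}^m-\overline{w}^m)$ and integrating monomial by monomial, the orthogonality relations $\int_{\mathbb{D}}z^a\overline{z}^b\,dA_\sigma=\delta_{ab}s_a$ and $\int_{\mathbb{D}}z^a\overline{z}^b\,dA_\tau=\delta_{ab}t_a$, with $s_a\asymp(1+a)^{-(\sigma+1)}$ and $t_a\asymp(1+a)^{-(\tau+1)}$, annihilate every cross term and force $n=m$. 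Carrying this out first for polynomials (where all sums are finite and the interchanges are trivial) and then passing to general $f$ by a standard approximation argument, I expect to obtain
$$\int_{\mathbb{D}}\int_{\mathbb{D}}\frac{|f(z)-f(w)|^2}{|1-\overline{w}z|^{2(\beta+2)}}\,dA_\sigma(z)\,dA_\tau(w)=\sum_{n\ge1}|a_n|^2\,\lambda_n,\qquad\lambda_n=\sum_{k\ge0}\Big[c_k^2\big(s_{n+k}t_k+s_kt_{n+k}\big)-2c_kc_{n+k}\,s_{n+k}t_{n+k}\Big].$$
The whole theorem then reduces to the single two-sided estimate $\lambda_n\asymp n^{\,1-\sigma-\tau+2\beta}$, and note that $\lambda_n\ge0$ is automatic since it equals the double-integral seminorm of $z^n$.

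The main obstacle is precisely this asymptotic estimate for $\lambda_n$, which is delicate because the three summands are each individually of order $(1+k)^{2\beta-\sigma-\tau}$, so that for $\beta$ near the top of the admissible range the naive sum diverges and one must use the cancellation built into the bracket. I would split the sum at $k\asymp n$. On the range $0\le k\lesssim n$ one has $n+k\asymp n$, and the positive terms contribute a quantity comparable to $n^{-(\sigma+1)}\sum_{k\le n}k^{2\beta+1-\tau}+n^{-(\tau+1)}\sum_{k\le n}k^{2\beta+1-\sigma}$; here the lower bound $\beta>\frac{\max(\sigma,\tau)}{2}-1$ is exactly what guarantees that each partial sum is dominated by its top term and hence has size $n^{\,2\beta+1-\sigma-\tau}$, the desired order. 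On the tail $k\gtrsim n$ I would regroup the bracket, after setting $g_k=c_kt_k$ and $h_k=c_ks_k$, as $\sum_k c_k\big[s_{n+k}(g_k-g_{n+k})+t_{n+k}(h_k-h_{n+k})\big]$, recognising each increment as a step of length $n$; a Taylor expansion of the Gamma-quotients shows the leading part of this increment carries the factor $(2\beta-\sigma-\tau)$, so the tail is of size $|2\beta-\sigma-\tau|\,n\sum_{k\gtrsim n}k^{2\beta-\sigma-\tau-1}\asymp n^{\,2\beta+1-\sigma-\tau}$ when $\beta<\frac{\sigma+\tau}{2}$, while at the critical value $\beta=\frac{\sigma+\tau}{2}$ this coefficient vanishes and the next order must be tracked to avoid a spurious logarithmic divergence. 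The careful bookkeeping of these Gamma-function asymptotics, the verification that the near-diagonal terms do not cancel to higher order (so that $\lambda_n$ is bounded \emph{below}, not merely above), and the treatment of the borderline case $\beta=\frac{\sigma+\tau}{2}$ are where the genuine work lies; the two hypotheses on $\beta,\sigma,\tau$ enter exactly at these two points.
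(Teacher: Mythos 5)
The paper does not prove this statement at all: Theorem 2.1 is imported verbatim from \cite{Balooch} and used as a black box, so there is no internal proof to compare your argument against. Judging your proposal on its own merits, the overall strategy (reduce both sides to a coefficient multiplier $\lambda_n$ and show $\lambda_n\asymp n^{1-\sigma-\tau+2\beta}$) is reasonable, and your computation of the right-hand side is correct. The fatal problem is the identity you ``expect to obtain'' for $\lambda_n$. To get it you integrate the kernel expansion term by term, but the four series you are recombining are individually divergent whenever $\beta\ge\frac{\sigma+\tau-1}{2}$: each of $\sum_k c_k^2 s_{n+k}t_k$, $\sum_k c_k^2 s_k t_{n+k}$, $\sum_k c_kc_{n+k}s_{n+k}t_{n+k}$ has general term of order $k^{2\beta-\sigma-\tau}$. (Your parenthetical that for polynomials ``all sums are finite and the interchanges are trivial'' is wrong: taking $f$ polynomial truncates the sums in $n,m$, not the kernel sums in $k,l$, and it is the latter that diverge.) Regrouping them into a single bracket is an $\infty-\infty$ manipulation, and it gives a genuinely false answer, not merely an unjustified one.

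Concretely, take $\sigma=\tau=\beta=0$ and $n=1$, an admissible critical case ($p=0$, the classical Dirichlet space). Then $c_k=k+1$ and $s_a=t_a=\frac{1}{a+1}$, and your bracket equals
$$\frac{k+1}{k+2}+\frac{k+1}{k+2}-\frac{2(k+1)}{k+2}=0$$
for every $k$, so your formula yields $\lambda_1=0$. But a M\"obius change of variables gives
$$\int_{\mathbb{D}}\int_{\mathbb{D}}\frac{|z-w|^2}{|1-\overline{w}z|^4}\,dA(z)\,dA(w)=\int_{\mathbb{D}}\Bigl(\,\sum_{k\ge0}\frac{|w|^{2k}}{k+2}\Bigr)dA(w)=\sum_{k\ge0}\frac{1}{(k+1)(k+2)}=1.$$
If one instead integrates over $r\mathbb{D}\times r\mathbb{D}$ and then lets $r\to1$, one finds $\lambda_1=\lim_{r\to1}2r^2(1-r^2)\sum_k\frac{k+1}{k+2}r^{4k+4}=1$: the value is carried entirely by the boundary limit and is invisible term by term. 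The same computation shows that your proposed lower bound (positive near-diagonal terms dominating) also fails, since in this case the cancellation inside the bracket is exact for every single $k$. Any proof along these lines must therefore keep the truncation parameter $r$ alive until the end (or approach the inner integral analytically, e.g.\ via the standard estimate $\int_{\mathbb{D}}(1-|w|^2)^{t}|1-\overline{w}z|^{-2-t-c}\,dA(w)\asymp(1-|z|^2)^{-c}$ applied after writing $f(z)-f(w)$ as an integral of $f'$); as written, the reduction to $\sum_k[\cdot]$ is not a gap to be filled but an incorrect identity on which the rest of the argument is built.
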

We define now a lift-type operator, similarly to \cite{Zhu}, this time with an exponent of two positive parameters $p,\gamma>0$. 
\begin{definition} \textit{Let $p,\gamma>0$ and $f\in H(\mathbb{D}).$ The lift-type operator is defined as follows:  }
$$L^{p,\gamma}:H(\mathbb{D})\to H(\mathbb{D}^2),$$
$$L^{p,\gamma}(f):=\frac{f(z)-f(w)}{(1-\overline{w}z)^{p/ \gamma}},z,w \in \mathbb{D}$$
\end{definition}
The first observation that one makes immediately is that the fraction that defines this operator appears in the above mentioned characterization of the Dirichlet-type spaces. The following proposition follows immediately:
\begin{proposition} \textit{Let $f\in H(\mathbb{D})$ and $\sigma=\tau > -1$ and $\beta$ as in Theorem 2.1. The operator $L^{2,2(\beta+2)}(f)$ maps $\mathcal{D}_{2\sigma-2\beta}(\mathbb{D})$ into $A^2_{\sigma}(\mathbb{D}^2).$}
\end{proposition}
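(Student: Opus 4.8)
The plan is to prove the statement by a direct computation of the $A^2_\sigma(\mathbb{D}^2)$-norm of the lifted function, after which the assertion reduces to a single application of Theorem 2.1. First I would unwind the definition of the lift-type operator: for $f\in H(\mathbb{D})$ the modulus of the denominator is produced so that
$$\bigl|L^{2,2(\beta+2)}(f)(z,w)\bigr|^2=\frac{|f(z)-f(w)|^2}{|1-\overline{w}z|^{2(\beta+2)}},$$
and, since the same weight $dA_\sigma$ is taken in both variables, integrating gives
$$\left\|L^{2,2(\beta+2)}(f)\right\|^2_{A^2_\sigma(\mathbb{D}^2)}=\int_{\mathbb{D}}\int_{\mathbb{D}}\frac{|f(z)-f(w)|^2}{|1-\overline{w}z|^{2(\beta+2)}}\,dA_\sigma(z)\,dA_\sigma(w).$$

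The right-hand side is precisely the left member of the equivalence in Theorem 2.1 in the specialization $\tau=\sigma$. For this choice the admissibility condition $\frac{\max(\sigma,\tau)}{2}-1<\beta\le\frac{\sigma+\tau}{2}$ becomes $\frac{\sigma}{2}-1<\beta\le\sigma$, which is exactly the range of $\beta$ assumed in the statement, and the Dirichlet index becomes $\sigma+\tau-2\beta=2\sigma-2\beta$. Theorem 2.1 therefore yields
$$\left\|L^{2,2(\beta+2)}(f)\right\|^2_{A^2_\sigma(\mathbb{D}^2)}\asymp\|f\|^2_{\mathcal{D}_{2\sigma-2\beta}(\mathbb{D})}.$$
In particular, if $f\in\mathcal{D}_{2\sigma-2\beta}(\mathbb{D})$ the right-hand side is finite, whence $L^{2,2(\beta+2)}(f)\in A^2_\sigma(\mathbb{D}^2)$, which is the claim. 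Note that the estimate is in fact two-sided, so the lift embeds $\mathcal{D}_{2\sigma-2\beta}(\mathbb{D})$ isometrically up to constants (the kernel being exactly the constant functions, on which both quantities vanish).

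The point that genuinely requires care—rather than the norm identity, which is immediate once the parameters are aligned—is the interpretation of the lift as a bona fide element of the bidisc Bergman space. Because the denominator carries $\overline{w}$ and not $w$, one must specify in precisely which sense $L^{2,2(\beta+2)}(f)$ is regarded as a function on $\mathbb{D}^2$ and confirm that the resulting membership is the one compatible with the composition-operator argument that follows. The finiteness of the $L^2(dA_\sigma\otimes dA_\sigma)$ norm computed above is exactly what secures the required integrability, and once this identification is fixed the proposition follows at once from Theorem 2.1. I regard this bookkeeping, together with the verification that the exponent of $|1-\overline{w}z|$ produced by the operator matches the exponent $2(\beta+2)$ demanded by Theorem 2.1, as the only substantive step; everything else is a routine unwinding of definitions.
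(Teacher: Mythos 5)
Your argument is correct and is essentially identical to the paper's own proof: unwind the definition of $L^{2,2(\beta+2)}$, recognize the resulting double integral as the left-hand side of Theorem 2.1 with $\tau=\sigma$, check the parameter range, and conclude. (Both you and the paper silently read the exponent in the definition as $\gamma/p$ rather than the stated $p/\gamma$ so that the denominator becomes $|1-\overline{w}z|^{2(\beta+2)}$ after squaring; your closing remark about verifying this exponent, and about the sense in which the lift lies in $H(\mathbb{D}^2)$ despite the $\overline{w}$, flags real loose ends that the paper itself does not address.)
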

\begin{proof}
The proof follows by a simple use of the asymptotic equality of Theorem 2.1 but we will write it down for convenience.
\begin{align}
||L^{2,2(\beta+2)}(f)||^2_{A^2_{\sigma}(\mathbb{D}^2)}&=\int_{\mathbb{D}}\int_{\mathbb{D}}\left|L^{2,2(\beta+2)}(f)(z,w)\right|^2dA_{\sigma}(z)dA_{\sigma}(w)&&\\
&=\int_{\mathbb{D}}\int_{\mathbb{D}}\left|\frac{f(z)-f(w)}{(1-\overline{w}z)^{\frac{2(\beta+2)}{2}}}\right|^2dA_{\sigma}(z)dA_{\sigma}(w)&&\\
&\lesssim ||f||^2_{\mathcal{D}_{2\sigma-2\beta}(\mathbb{D})}.
\end{align}
\end{proof}
The next theorem that we will state is a characterization of the symbols $\Psi \in \mathcal{O}(\mathbb{D}^2,\mathbb{D}^2)\cap \mathcal{C}^1(\overline{\mathbb{D}^2})$ that induce a bounded composition operator on the weighted Bergman space $A_{\beta}^2(\mathbb{D}^2),\beta>-1$ and is due to \L{}.Kosinski, found in \cite{Kosinski},(section 1, page 3)
\begin{theorem} \textit{Let $\Psi \in \mathcal{O}(\mathbb{D}^2,\mathbb{D}^2)\cap \mathcal{C}^1(\overline{\mathbb{D}^2}). $ Then, $C_{\Psi}:A_{\beta}^2(\mathbb{D}^2)\to A_{\beta}^2(\mathbb{D}^2)$ is bounded, if and only if the derivative $d_{\zeta}\Psi$ is invertible for all $\zeta\in\mathbb{T}^2$ such that $\Psi(\zeta)\in\mathbb{T}^2.$}
\end{theorem}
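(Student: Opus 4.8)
The plan is to translate the boundedness of $C_{\Psi}$ into a Carleson-embedding statement for a pushforward measure, and then to decode that embedding from the boundary behaviour of $\Psi$. First I would change variables in the defining integral to write
$$\|C_{\Psi}f\|_{A^2_{\beta}(\mathbb{D}^2)}^2=\int_{\mathbb{D}^2}|f(\Psi(z,w))|^2\,dA_{\beta}(z)dA_{\beta}(w)=\int_{\mathbb{D}^2}|f|^2\,d\mu_{\Psi},$$
where $\mu_{\Psi}:=\Psi_{\ast}\!\left(dA_{\beta}\otimes dA_{\beta}\right)$ is the pushforward of the weighted area measure. With this identity in hand, $C_{\Psi}$ is bounded on $A^2_{\beta}(\mathbb{D}^2)$ if and only if $\mu_{\Psi}$ is a Carleson measure for $A^2_{\beta}(\mathbb{D}^2)$. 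Because the reproducing kernel of the weighted Bergman space of the bidisc factors as a product of one-variable kernels, testing the embedding on the normalized kernels $k_{a}\otimes k_{b}$ shows that the Carleson property is equivalent to the geometric bound $\mu_{\Psi}\!\left(D(a,r)\times D(b,r)\right)\lesssim (1-|a|^2)^{\beta+2}(1-|b|^2)^{\beta+2}$ uniformly over products of pseudohyperbolic disks; this is the condition I would actually verify.

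The second step is a localization. Since $dA_{\beta}\otimes dA_{\beta}$ vanishes at the boundary and $\Psi\in\mathcal{C}^1(\overline{\mathbb{D}^2})$, the measure $\mu_{\Psi}$ can only violate the box estimate near a point $\eta_{0}\in\mathbb{T}^2$, and mass accumulates at such $\eta_{0}$ only through boundary points $\zeta_{0}$ with $\Psi(\zeta_{0})=\eta_{0}$ at which $\Psi$ drives \emph{both} coordinates simultaneously to the distinguished boundary. A short argument with the maximum principle applied to the coordinate functions identifies the critical contact set as $E:=\{\zeta\in\mathbb{T}^2:\Psi(\zeta)\in\mathbb{T}^2\}$, and by compactness of $\overline{\mathbb{D}^2}$ the verification of the Carleson condition reduces to a fixed neighbourhood of each $\zeta_{0}\in E$; away from $E$ the image of $\Psi$ stays at a positive distance from $\mathbb{T}^2$ and the $\mathcal{C}^1$ bound makes the local contribution to $\mu_{\Psi}$ harmless.

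For the sufficiency direction, fix $\zeta_{0}\in E$ and assume $d_{\zeta_{0}}\Psi$ is invertible. Holomorphy together with the $\mathcal{C}^1$ regularity then makes $\Psi$ a local biholomorphism near $\zeta_{0}$ that carries $\mathbb{T}^2$ to $\mathbb{T}^2$ bi-Lipschitzly, so that $1-|\Psi_{j}(\zeta)|$ is comparable to the appropriate combination of the factors $1-|\zeta_{k}|$ and the real Jacobian is bounded above and bounded away from zero. Consequently products of pseudohyperbolic disks pull back to comparable products, the pushforward of $(1-|z|^2)^{\beta}(1-|w|^2)^{\beta}$ is dominated by $(1-|z|^2)^{\beta}(1-|w|^2)^{\beta}$ up to a constant, and the product box estimate for $\mu_{\Psi}$ follows. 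Summing the finitely many local contributions, together with the harmless interior part, yields the Carleson bound and hence the boundedness of $C_{\Psi}$.

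The necessity direction is where the real work lies. If $d_{\zeta_{0}}\Psi$ is singular for some $\zeta_{0}\in E$, then $\Psi$ contracts a boundary direction at $\zeta_{0}$, so it squeezes an anisotropically large portion of a neighbourhood of $\zeta_{0}$ into a thin tube about $\mathbb{T}^2$; feeding the normalized kernels $k_{a}\otimes k_{b}$ centred at $\eta_{0}=\Psi(\zeta_{0})$ into the embedding then forces $\|C_{\Psi}(k_{a}\otimes k_{b})\|$ to diverge while $\|k_{a}\otimes k_{b}\|=1$, contradicting boundedness. I expect the main obstacle to be precisely this two-variable boundary analysis: in contrast to the one-variable Bergman space, where every holomorphic self-map induces a bounded composition operator, here one must control how $\Psi$ distributes mass simultaneously in both coordinates near $\mathbb{T}^2$, and the delicate point is to convert the linear-algebraic (non-)invertibility of $d_{\zeta_{0}}\Psi$ into the sharp anisotropic estimate for $\mu_{\Psi}\!\left(D(a,r)\times D(b,r)\right)$, while separately ruling out any contribution from tangential contact or from the lower-dimensional faces $\mathbb{T}\times\mathbb{D}$ and $\mathbb{D}\times\mathbb{T}$.
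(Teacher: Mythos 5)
A point of order first: the paper does not prove this statement at all --- it is Theorem 2.2, quoted from Kosinski's article \cite{Kosinski} and used as a black box --- so there is no in-paper proof to compare against, and I can only judge your proposal on its own merits. Your overall frame (reduce boundedness of $C_{\Psi}$ to a Carleson embedding for the pushforward measure $\mu_{\Psi}$, localize to boundary contact points, and read the box estimate off the behaviour of $d\Psi$ there) is a reasonable skeleton and is in the spirit of how such results are attacked. But the proposal has genuine gaps at exactly the places where the theorem is hard. First, testing on the product kernels $k_{a}\otimes k_{b}$ only shows that the box condition $\mu_{\Psi}\left(D(a,r)\times D(b,r)\right)\lesssim (1-|a|^2)^{\beta+2}(1-|b|^2)^{\beta+2}$ is \emph{necessary}; its \emph{sufficiency} for the embedding in the bidisc is a separate theorem (a Hastings-type Carleson measure theorem for Bergman spaces of the polydisc) that must be invoked or proved --- recall that in the product Hardy space the analogous rectangle condition famously fails to be sufficient, so in a product domain this step cannot be waved through by a reproducing-kernel heuristic.

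Second, and more seriously, the local analysis at a contact point is asserted rather than proved. The claim that invertibility of $d_{\zeta_{0}}\Psi$ makes $\Psi$ a local biholomorphism ``carrying $\mathbb{T}^2$ to $\mathbb{T}^2$ bi-Lipschitzly'' is false in general: $\Psi(\zeta_{0})\in\mathbb{T}^2$ does not force nearby points of $\mathbb{T}^2$ to map into $\mathbb{T}^2$ (already in one variable $\varphi(z)=(1+z)/2$ fixes $1$ but sends the rest of the circle into the open disc). What is actually needed is a two-variable Julia--Wolff--Carath\'eodory-type comparison of the form $1-|\Psi_{j}(z,w)|\gtrsim \alpha_{j1}(1-|z|)+\alpha_{j2}(1-|w|)$ near $\zeta_{0}$, together with an argument converting invertibility of $d_{\zeta_{0}}\Psi$ into nondegeneracy of the coefficient matrix $(\alpha_{jk})$; that is the technical core of the theorem and the sketch does not carry it out. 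The necessity half is likewise only a restatement of the goal (``squeezes an anisotropically large portion into a thin tube'') with no construction of the test functions from a singular derivative. Finally, the contact set of $\Psi$ with $\mathbb{T}^2$ can meet the faces $\mathbb{T}\times\mathbb{D}$ and $\mathbb{D}\times\mathbb{T}$ (e.g.\ $\Psi(z,w)=(z,z)$, for which $C_{\Psi}$ is indeed unbounded), so the reduction to the set $E\subset\mathbb{T}^2$, which you defer to the last sentence as a ``delicate point'', is itself part of what has to be proved. In short: plausible skeleton, but the three load-bearing steps are missing.
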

Using this result, we can deduce the following lemma:
\begin{lemma}
\textit{Let $\varphi \in \mathcal{O}(\mathbb{D},\mathbb{D})\cap \mathcal{C}^1(\overline{\mathbb{D}}).$ Set $\Phi(z_1,z_2)=(\varphi(z_1),\varphi(z_2)),z_1,z_2 \in \mathbb{D}.$ Then $C_{\Phi}:A_{\beta}^2(\mathbb{D}^2)\to A_{\beta}^2(\mathbb{D}^2)$ is bounded.}
\end{lemma}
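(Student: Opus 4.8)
The plan is to deduce the lemma directly from Theorem 2.3 applied to the map $\Phi$. For this I first have to check that $\Phi$ satisfies the standing hypotheses of that theorem, and then verify the invertibility condition on the differential. That $\Phi(z_1,z_2)=(\varphi(z_1),\varphi(z_2))$ is a holomorphic self-map of $\mathbb{D}^2$ is immediate, since each coordinate is a holomorphic function of a single variable taking values in $\mathbb{D}$; and since $\varphi\in\mathcal{C}^1(\overline{\mathbb{D}})$, the map $\Phi$ is of class $\mathcal{C}^1$ on $\overline{\mathbb{D}^2}$. So $\Phi\in\mathcal{O}(\mathbb{D}^2,\mathbb{D}^2)\cap\mathcal{C}^1(\overline{\mathbb{D}^2})$ and Theorem 2.3 is applicable; it remains only to show that $d_\zeta\Phi$ is invertible at every $\zeta\in\mathbb{T}^2$ with $\Phi(\zeta)\in\mathbb{T}^2$.

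The key structural observation is that, because $\Phi$ acts diagonally, its holomorphic differential at $\zeta=(\zeta_1,\zeta_2)$ is the diagonal matrix
$$
d_\zeta\Phi=\begin{pmatrix}\varphi'(\zeta_1) & 0\\ 0 & \varphi'(\zeta_2)\end{pmatrix},
$$
so $d_\zeta\Phi$ is invertible precisely when $\varphi'(\zeta_1)\neq 0$ and $\varphi'(\zeta_2)\neq 0$. Next I would identify the relevant boundary set: for $\zeta=(\zeta_1,\zeta_2)\in\mathbb{T}^2$ the condition $\Phi(\zeta)\in\mathbb{T}^2$ means exactly $\varphi(\zeta_1)\in\mathbb{T}$ and $\varphi(\zeta_2)\in\mathbb{T}$. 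Thus the whole lemma reduces to the following one-variable claim: \emph{if $\zeta_0\in\mathbb{T}$ and $\varphi(\zeta_0)\in\mathbb{T}$, then $\varphi'(\zeta_0)\neq 0$.}

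This boundary nonvanishing is where I expect the real work to lie, and it is the main obstacle. The natural tool is the Julia--Wolff--Carath\'eodory theorem: at a point $\zeta_0\in\mathbb{T}$ where $\varphi$ has unimodular boundary value $\eta_0=\varphi(\zeta_0)\in\mathbb{T}$, the angular derivative exists and its modulus equals $\delta=\liminf_{z\to\zeta_0}\frac{1-|\varphi(z)|}{1-|z|}$, which is strictly positive. The positivity of $\delta$ is the crux and follows from the invariant Schwarz--Pick lemma, which bounds $\frac{1-|\varphi(z)|}{1-|z|}$ below by a positive constant depending only on $|\varphi(0)|<1$; this rules out the degenerate ``$\delta=0$'' alternative. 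Since $\varphi\in\mathcal{C}^1(\overline{\mathbb{D}})$, the classical derivative $\varphi'(\zeta_0)$ exists and coincides with the (finite) angular derivative, so $|\varphi'(\zeta_0)|=\delta>0$, giving $\varphi'(\zeta_0)\neq 0$. Applying this to $\zeta_1$ and $\zeta_2$ shows that $\varphi'(\zeta_1)\varphi'(\zeta_2)\neq 0$, hence $\det d_\zeta\Phi\neq 0$ and $d_\zeta\Phi$ is invertible at every $\zeta\in\mathbb{T}^2$ with $\Phi(\zeta)\in\mathbb{T}^2$. By Theorem 2.3, $C_\Phi$ is bounded on $A^2_\beta(\mathbb{D}^2)$, which is the assertion.
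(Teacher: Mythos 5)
Your proposal is correct and follows essentially the same route as the paper: verify the hypotheses of Kosi\'nski's rank sufficiency theorem, observe that the diagonal structure of $\Phi$ makes $d_\zeta\Phi$ diagonal with determinant $\varphi'(\zeta_1)\varphi'(\zeta_2)$, and invoke the Julia--Carath\'eodory theorem to see that $\varphi'$ cannot vanish at a boundary point carried to the boundary. Your write-up is in fact somewhat more careful than the paper's at the key step, since you spell out why the angular derivative is finite, nonzero, and coincides with the classical derivative under the $\mathcal{C}^1(\overline{\mathbb{D}})$ hypothesis.
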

\begin{proof} It is quite obvious that $\Phi \in \mathcal{O}(\mathbb{D}^2,\mathbb{D}^2)\cap \mathcal{C}^1(\overline{\mathbb{D}^2}). $ We only have to show that for every $\zeta\in \mathbb{T}^2$ such that $\Phi(\zeta)\in\mathbb{T}^2$, the derivative $d_{\zeta}\Phi$ is invertible.   Let $\zeta=(\zeta_1,\zeta_2)$ such that  $$\Phi(\zeta_1,\zeta_2)=(\varphi(\zeta_1),\varphi(\zeta_2))\in \mathbb{T}^2.$$ Of course, this only occurs for the points $\zeta_1,\zeta_2\in\mathbb{T}$ such that $\varphi(\zeta_1),\phi(\zeta_2)\in\mathbb{T},\zeta_1\neq\zeta_2.$  We calculate the Jacobian of the derivative $d_{\zeta}\Phi$ for such points $\zeta\in\mathbb{T}^2$. It is an immediate observation that $\frac{\partial \varphi(z_1)}{\partial z_2}=\frac{\partial \varphi(z_2)}{\partial z_1}=0.$ So the Jacobian is
$$|d_\zeta\Phi|=\varphi'(\zeta_1)\varphi'(\zeta_2)\neq 0.$$
By the fact that $\varphi$ is of class $\mathcal{C}^1$, hence continuous on the boundary, one can calculate the value of $\varphi'$ on the boundary points $\zeta_1,\zeta_2$. By the Julia-Caratheodory Theorem (see \cite{Caratheodory}) it is asserted that the value of the derivative of $\varphi$ is non-zero. Hence, by Theorem 2.2., $C_{\Phi}$ defines a bounded composition operator on the weighted Bergman space $A_{\beta}^2(\mathbb{D}^2),$ and the proof is complete.
\end{proof}
\section{Main result and proof} Here we state our main result and the proof of it. In what follows, we denote by 
$$k^{\varphi}(z,w)=\frac{1-\varphi(z)\overline{\varphi(w)}}{1-z\overline{w}},z,w\in \mathbb{D},$$
for a function $\varphi \in \mathcal{O}(\mathbb{D},\mathbb{D})\cap \mathcal{C}^1(\overline{\mathbb{D}}),$
and by $||\cdot||_{\infty}$ the supremum norm over the bi-disc $\mathbb{D}^2,$ that is $||k||_{\infty}=\sup_{(z,w)\in\mathbb{D}^2}|k(z,w)|$  . 
\begin{theorem} \textit{Let $\varphi \in \mathcal{O}(\mathbb{D},\mathbb{D})\cap \mathcal{C}^1(\overline{\mathbb{D}})$ such that}
$$||k^{\varphi}(z,w)||_{\infty}<+\infty.$$ 
\textit{Then $C_{\varphi}:\mathcal{D}_p(\mathbb{D})\to \mathcal{D}_p(\mathbb{D})$ is bounded, for $p=2\sigma-2\beta,$ $\sigma>0$ satisfying $\frac{\sigma}{2}-1<\beta<\sigma$.}
\end{theorem}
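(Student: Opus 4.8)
The plan is to transport the problem to the weighted Bergman space of the bidisc, where Lemma~2.1 already supplies boundedness, and to come back via the double-integral characterization of Theorem~2.1. Fix $\sigma>0$ and $\beta$ with $\frac{\sigma}{2}-1<\beta<\sigma$; then $p=2\sigma-2\beta>0$, and choosing $\tau=\sigma$ in Theorem~2.1 (whose hypothesis becomes $\frac{\sigma}{2}-1<\beta\le\sigma$, which holds) gives, for every $g\in H(\mathbb{D})$,
\[
\|g\|_{\mathcal{D}_p(\mathbb{D})}^2\asymp\int_{\mathbb{D}}\int_{\mathbb{D}}\frac{|g(z)-g(w)|^2}{|1-\overline{w}z|^{2(\beta+2)}}\,dA_\sigma(z)\,dA_\sigma(w)=\big\|L^{2,2(\beta+2)}g\big\|_{A_\sigma^2(\mathbb{D}^2)}^2 .
\]
Taking $g=f\circ\varphi$ reduces the desired inequality $\|C_\varphi f\|_{\mathcal{D}_p}\lesssim\|f\|_{\mathcal{D}_p}$ to an estimate for $\big\|L^{2,2(\beta+2)}(f\circ\varphi)\big\|_{A_\sigma^2(\mathbb{D}^2)}$.

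The key step is a pointwise factorization of this lift. Writing $L=L^{2,2(\beta+2)}$, so that $Lg(z,w)=\dfrac{g(z)-g(w)}{(1-\overline{w}z)^{\beta+2}}$, and $\Phi(z,w)=(\varphi(z),\varphi(w))$, one computes directly
\[
L(f\circ\varphi)(z,w)=\big(C_\Phi(Lf)\big)(z,w)\cdot\big(k^\varphi(z,w)\big)^{\beta+2},
\]
because $(Lf)\circ\Phi$ carries the denominator $(1-\overline{\varphi(w)}\varphi(z))^{\beta+2}$ and $\dfrac{1-\overline{\varphi(w)}\varphi(z)}{1-\overline{w}z}=k^\varphi(z,w)$. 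Since $\beta>\frac{\sigma}{2}-1\ge-1$ forces $\beta+2>0$, the hypothesis $\|k^\varphi\|_\infty<+\infty$ turns $(k^\varphi)^{\beta+2}$ into a bounded multiplier, and passing to squared $A_\sigma^2(\mathbb{D}^2)$-norms yields
\[
\big\|L(f\circ\varphi)\big\|_{A_\sigma^2(\mathbb{D}^2)}^2\le\|k^\varphi\|_\infty^{2(\beta+2)}\,\big\|C_\Phi(Lf)\big\|_{A_\sigma^2(\mathbb{D}^2)}^2 .
\]

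It then remains to feed in the two bidisc facts. Lemma~2.1 gives boundedness of $C_\Phi$ on $A_\sigma^2(\mathbb{D}^2)$ (applicable since $\sigma>0>-1$), while Proposition~2.1 together with Theorem~2.1 gives $\|Lf\|_{A_\sigma^2(\mathbb{D}^2)}^2\asymp\|f\|_{\mathcal{D}_p}^2$. Chaining the three displays,
\[
\|C_\varphi f\|_{\mathcal{D}_p}^2\asymp\big\|L(f\circ\varphi)\big\|_{A_\sigma^2}^2\lesssim\|k^\varphi\|_\infty^{2(\beta+2)}\,\|Lf\|_{A_\sigma^2}^2\lesssim\|k^\varphi\|_\infty^{2(\beta+2)}\,\|f\|_{\mathcal{D}_p}^2 ,
\]
so $C_\varphi$ is bounded on $\mathcal{D}_p(\mathbb{D})$ with operator norm controlled by $\|k^\varphi\|_\infty^{\beta+2}$.

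The factorization and the multiplier bound are purely algebraic, so I expect the real weight of the argument to fall on the application of Lemma~2.1 to $Lf$. The delicate point is that $Lf(z,w)=\dfrac{f(z)-f(w)}{(1-\overline{w}z)^{\beta+2}}$ depends on $w$ through both $f(w)$ and $\overline{w}$, hence is not a holomorphic function on $\mathbb{D}^2$ and so is not literally an element of the space to which Kosinski's Theorem~2.2, and therefore Lemma~2.1, applies. What is actually required is the single inequality $\int_{\mathbb{D}^2}|Lf\circ\Phi|^2\,dA_\sigma\,dA_\sigma\lesssim\int_{\mathbb{D}^2}|Lf|^2\,dA_\sigma\,dA_\sigma$ for this particular non-holomorphic $Lf$. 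One route is to read the boundedness of $C_\Phi$ as a statement about the pullback measure $\Phi_*(dA_\sigma\otimes dA_\sigma)$ and to verify directly that it controls the relevant integral for the kernels appearing in $Lf$; because $\Phi=(\varphi,\varphi)$ the estimate decouples into the single disc, where the $\mathcal{C}^1$-regularity and the Julia--Carath\'eodory information on $\varphi$ used in Lemma~2.1 are available. Closing this transfer rigorously---reconciling the holomorphic composition theory on the bidisc with the non-holomorphic lift---is the one genuinely delicate point, while the parameter bookkeeping ($\beta+2>0$ for the multiplier bound, $\beta<\sigma$ for $p>0$, and $\frac{\sigma}{2}-1<\beta\le\sigma$ for Theorem~2.1) is routine.
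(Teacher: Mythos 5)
Your argument coincides with the paper's: your pointwise factorization $L^{2,2(\beta+2)}(f\circ\varphi)=C_{\Phi}\bigl(L^{2,2(\beta+2)}f\bigr)\cdot (k^{\varphi})^{\beta+2}$ is precisely the paper's step of multiplying and dividing the integrand by $|1-\overline{\varphi(w)}\varphi(z)|^{2(\beta+2)}$ and extracting the factor $\|k^{\varphi}\|_{\infty}^{2(\beta+2)}$, and the two remaining ingredients (Lemma 2.1 for the boundedness of $C_{\Phi}$, Proposition 1 for the lift) are invoked in the same way, so you have reproduced the intended proof.

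The ``delicate point'' you flag at the end is genuine, and it is equally present in the published proof: because the denominator $(1-\overline{w}z)^{\beta+2}$ contains $\overline{w}$, the lift $L^{2,2(\beta+2)}f$ is holomorphic in $z$ but not in $w$, so it does not belong to $A^2_{\sigma}(\mathbb{D}^2)$ regarded as a space of holomorphic functions, and the operator-norm bound for $C_{\Phi}$ cannot be cited verbatim. Your proposed repair is the right one and does close the gap: the integrand $\bigl|L^{2,2(\beta+2)}f(z,w)\bigr|^{2}$ is nonnegative and separately subharmonic (for fixed $w$ it is the modulus squared of a function holomorphic in $z$; for fixed $z$ it equals $\bigl|(f(w)-f(z))/(1-\overline{z}w)^{\beta+2}\bigr|^{2}$, the modulus squared of a function holomorphic in $w$). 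Since $\Phi=(\varphi,\varphi)$, the pullback measure is the product $(A_{\sigma}\circ\varphi^{-1})\otimes(A_{\sigma}\circ\varphi^{-1})$; applying the boundedness of $C_{\Phi}$ to functions of one variable shows that $C_{\varphi}$ is bounded on $A^2_{\sigma}(\mathbb{D})$, hence $A_{\sigma}\circ\varphi^{-1}$ satisfies the one-variable Carleson condition for $A^2_{\sigma}(\mathbb{D})$, and that geometric condition controls integrals of arbitrary nonnegative subharmonic functions; two successive applications, in $z$ and then in $w$ (Tonelli), yield exactly the inequality needed for the non-holomorphic lift. With that supplement your write-up is complete, and on this point it is more careful than the paper itself.
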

\begin{proof} Let $\sigma>0$ and $\beta\in \mathbb{R}$ satisfying the conditions that we gave in our statement. For convenience, set $q=2(\beta+2).$
\begin{align} 
||C_{\varphi}(f)||^2_{\mathcal{D}_{2\sigma-2\beta}(\mathbb{D})}&\asymp \int_{\mathbb{D}}\int_{\mathbb{D}}\frac{|f\circ\varphi(z)-f\circ \varphi(w)|^2}{|1-\overline{w}z|^{q}}dA_{\sigma}(z)dA_{\sigma}(w)&&\\
&=\int_{\mathbb{D}}\int_{\mathbb{D}}\frac{|f\circ\varphi(z)-f\circ \varphi(w)|^2}{|1-\overline{\varphi(w)}\varphi(z)|^{q}}\frac{|1-\overline{\varphi(w)}\varphi(z)|^{q}}{|1-\overline{w}z|^{q}}dA_{\sigma}(z)dA_{\sigma}(w)&&\\
&\leq \sup_{(z,w)\in \mathbb{D}^2}\left|k^{\varphi}(z,w)\right|^{q}\int_{\mathbb{D}}\int_{\mathbb{D}}\frac{|f\circ\varphi(z)-f\circ \varphi(w)|^2}{|1-\overline{\varphi(w)}\varphi(z)|^{q}}dA_{\sigma}(z)dA_{\sigma}(w).
\end{align}
 At this point, we observe that the integral on the right hand side can be expressed as the norm of a composition operator $C_{\Phi}$ on the weighted Bergman space $A_{\sigma}^2(\mathbb{D}^2),$ where $\Phi=\Phi(z_1,z_2)=(\varphi(z_1),\varphi(z_2)),z_1,z_2\in\mathbb{D}.$ To be precise:
$$\int_{\mathbb{D}}\int_{\mathbb{D}}\frac{|f\circ\varphi(z)-f\circ \varphi(w)|^2}{|1-\overline{\varphi(w)}\varphi(z)|^{2(\beta+2)}}dA_{\sigma}(z)dA_{\sigma}(w)=||C_{\Phi}(L^{2,2(\beta+2)}(f))||^2_{A^2_{\sigma}(\mathbb{D}^2)}.$$
By Lemma 2.1. we have that $C_\Phi:A^2_{\sigma}(\mathbb{D}^2)\to A^2_{\sigma}(\mathbb{D}^2)$ is bounded. Hence, by utilizing Proposition 1, we receive:
$$||C_{\Phi}(L^{2,2(\beta+2)}(f))||^2_{A^2_{\sigma}(\mathbb{D}^2)}\lesssim||L^{2,2(\beta+2)}(f)||^2_{A^2_{\sigma}(\mathbb{D}^2)}\lesssim ||f||^2_{\mathcal{D}_{2\sigma-2\beta}(\mathbb{D})}.$$
Summarizing, we obtained:
$$||C_{\varphi}(f)||^2_{\mathcal{D}_{2\sigma-2\beta}(\mathbb{D})}\lesssim ||k^{\varphi}(z,w)||_{\infty}^{2(\beta+2)}||f||^2_{\mathcal{D}_{2\sigma-2\beta}(\mathbb{D})},$$
which, by our assumption that the supremum is bounded, provides us with the desired result.
\end{proof}
\section{Concluding remark}
The significance of the main result of this note lies mainly in the fact that we managed to apply results from case of the bi-disc to the case of the unit disc. There exists another interesting aspect of the result, and that is the appearence of the \textit{De Branges-Rovnyak kernel} that is induced by the symbol $\varphi.$ Assuming that $\varphi \in \mathcal{O}(\mathbb{D},\mathbb{D})\cap \mathcal{C}^1(\overline{\mathbb{D}}),$ then it is known that $\varphi \in H^{\infty}(\mathbb{D})$ and also $||\varphi||_{H^{\infty}(\mathbb{D})}\leq 1$ which means that the function 
$$k^{\varphi}(z,w)=\frac{1-\varphi(z)\overline{\varphi(w)}}{1-z\overline{w}},z,w\in \mathbb{D}$$ is the \textit{De Branges-Rovnyak reproducing kernel} associated with $\varphi.$ We can reformulate the main result of the note in the following manner: 
\begin{remark}
\textit{Let $\varphi \in \mathcal{O}(\mathbb{D},\mathbb{D})\cap \mathcal{C}^1(\overline{\mathbb{D}})$ and assume that $|k^{\varphi}|$, ($k^{\varphi}$ is the \textit{De Branges-Rovnyak kernel}  associated to $\varphi$) is a bounded function on the bi-disc. Then, the composition operator $C_{\varphi}:\mathcal{D}_p(\mathbb{D})\to \mathcal{D}_p(\mathbb{D}), p>0 $ is bounded.}
\end{remark}
For more details about the De Branges-Rovnyak spaces and kernel, the reader can check \cite{DeBranges}. Also, in the work of M.Jury (see \cite{Jury}), one can study the connections of the De Branges-Rovnyak kernel associated to a holomorphic self map of the unit disc and the corresponding composition operators in the Hardy and Bergman spaces of the unit disc, but also of the unit ball of $\mathbb{C}^n.$

\section{Financial support-Acknowledgements.}
\textbf{Financial Support:}
The author was supported by the NCN grant SONATA BIS no.2017/26/E/ST1/00723.\\\\
\textbf{Acknowledgements}
The author would like to thank the following people: Prof.\L{}ukasz Kosinski for his valuable help, Dr. D.Vavitsas, Dr. A.Kouroupis and A.Tsiri for their friendship, and his family for the support. Also, the author would like to thank the anonymous referee for the useful comments and the careful reading of the paper.

Athanasios Beslikas,\\
Doctoral School of Exact and Natural Studies\\
Institute of Mathematics,\\
Faculty of Mathematics and Computer Science,\\
Jagiellonian University\\ 
\L{}ojasiewicza 6\\
PL30348, Cracow, Poland\\
athanasios.beslikas@doctoral.uj.edu.pl


\begin{thebibliography}{8}
\bibitem{Balooch} A.Balooch, Z.Wu, \textit{A new formalization of Dirichlet-type spaces} Journal of Mathematical Analysis and Applications
Volume 526, Issue 1, 1 October 2023, 127322.
\bibitem{Bayart2} F.Bayart, \textit{Composition operators on the polydisk induced by affine maps} Journal of Functional Analysis
Volume 260, Issue 7, 1 April 2011, Pages 1969-2003
\bibitem{DeBranges} Louis de Branges and James Rovnyak. \textit{Square summable power series.} Holt,
Rinehart and Winston, New York, 1966.
\bibitem{Caratheodory} 	C. Caratheodory, \textit{Theory of functions of a complex variable"} , Chelsea (1954).
\bibitem{Jury} M.Jury, \textit{Reproducing kernels, de Branges-Rovnyak spaces, and norms of weighted composition operators}, Proceedings of the American Mathematical Society, 
Volume 135, Number 11, November 2007, Pages 3669–3675
\bibitem{Kosinski} \L{}.Kosinski, \textit{Composition operators on the polydisc}, Journal of Functional Analysis
Volume 284, Issue 5, 1 March 2023, 109801.
\bibitem{Zhu} S.Li, H.Wulan, R.Zhao, K.Zhu. A Characterisation of Bergman Spaces on the unit ball of $\mathbb{C}^n$. Glasgow Mathematical Journal. 2009;51(2):315-330. doi:10.1017/S0017089509004996
\bibitem{Pau} J.Pau, P.A.Perez, \textit{Composition operators acting on weighted Dirichlet spaces}, Journal of Mathematical Analysis and Applications
Volume 401, Issue 2, 15 May 2013, Pages 682-694.
\bibitem{Shapiro} J.H.Shapiro \textit{The essential norm of a composition operator,} Ann. Math. 12 (1987), 375-404.
\end{thebibliography}
\end{document}